\newtheorem{theorem}{Theorem}[section]
\theoremstyle{definition}
\newtheorem*{KeyFact}{Key Fact}
\newtheorem*{Cassini}{Cassini's Identity}
\title{Winning Lights Out with Fibonacci}
\author{Crista Arangala, Stephen Bailey, Kristen Mazur}
\begin{document}

\maketitle

\section{Introduction}

Before the days of Candy Crush and Angry Birds, long car rides for teenagers meant being stuck in the back seat, using the breeze from an open window to stay cool and small handheld games to stay entertained. One such game was the electronic, single-player game Lights Out by Tiger Electronics. In the original Lights Out game, the player is presented with a $5 \times 5$ grid of lights, some of which are on and others off. The objective is to turn off all lights, but pressing a button not only changes its state (on to off or vice versa) but also changes the states of the lights above, below, left, and right. Mathematicians have found   this game intriguing because we can analyze it using mathematics such as linear algebra and graph theory. For examples, see \cite{anderson}, \cite{ferman}, \cite{martin}, \cite{miss}, and \cite{torrence}.

But, what if we are now faced with an $m \times n$ grid of lights of varying brightness (or $k$ states), and to make things more fun we place the grid on a cylinder so that the left buttons  connect to the right. Then, with fatigue from boredom setting in on our long, phone-free car ride, we start playing the game using a strategy that uses minimal brain power; light chasing. We realize that we can turn off a light by pressing the button directly below it. So, when using light chasing we systematically turn off the lights one row at a time by pressing the appropriate lights in the row below. After pressing the last row of lights, it is likely that some lights in that  row will remain on. When this occurs in the original Lights Out game (whose grid of buttons lies in the plane instead of on a cylinder) we can often then press specific lights in the top row, work down the grid again using the light chasing strategy, and on this second pass, all lights will be turned off. While there is no quick mathematical way to determine which buttons to press at the top after the first pass of light chasing,  there are ``Look Up" tables that give us this information. In \cite{leach}, Leach uses linear algebra to explain how to generate these tables for the original game.

Instead of trying to generate ``Look Up" tables for the game whose grid is laid on a cylinder, we realize that when we start the game with all lights set to the same setting and when the number of rows and number of light states are just right, all lights are already turned off when we reach the last row of lights on the first pass. (So long as the board has at least three  columns, the number of columns does not seem to affect the game.) Teased with this revelation, we ponder the main question of this paper. \textit{If the game begins with all lights set to the same state, how many rows of buttons should the cylindrical board  have if the lights have $k$ states and we want to turn off all lights using only the first pass of the light chasing strategy.} From here on, we refer to the strategy of only completing the first pass of the light chasing process as \textit{one-pass chasing}.

The appeal of one-pass chasing on the cylindrical game  is that not only   can we play it on long car rides when our phones die, but also that its mathematical analysis invokes a connection to the Fibonacci sequence and properties thereof.  In the remainder of this paper we build a recursive model for the naive light chasing strategy on the cylindrical Lights Out board and study the recursion under various moduli. The connection to the Fibonacci sequence  allows us to use facts about the Fibonacci numbers (mod $k$) to provide solutions to our main  question.

\section{One-Pass Chasing on a Cylinder}\label{sec:basics}

Following the familiar construction of a cylinder by identifying the left and right sides of a rectangle, we visualize the cylindrical Lights Out board as a grid of light-up buttons in which pressing the leftmost button in a row affects the rightmost button (and vice versa).  Further, we represent the $k$ states of the lights as the $k$ congruence classes (mod $k$), with 0 representing the ``off" state. Pressing a button one times increases the state of its light by one (mod $k$). In particular, pressing a button whose light is at the $k-1$ state one time reduces the light to the 0 state, i.e., it turns the light off. In \Cref{fig:BasicEx} we show a cylindrical Lights Out game in which the lights have three states. Pressing the starred button on the board on the left results in the board on the right.

\begin{figure}[h]
\includegraphics[width=3.5in]{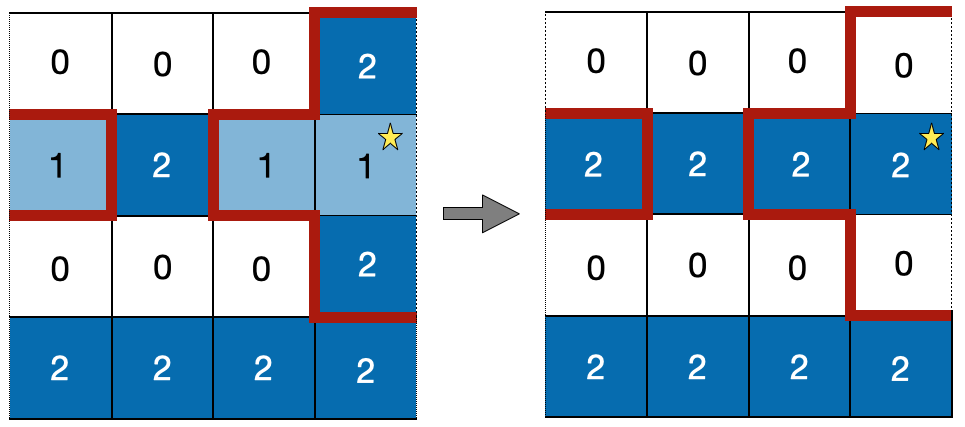}
\caption{Lights Out played on a cylindrical grid. The lights have three states. Pressing the starred button on the left results in the board on the right.}
\label{fig:BasicEx}
\end{figure}

We begin the one-pass chasing strategy by pressing the buttons in the second row to turn off the lights in the first row. Since we are playing on a cylinder, it does not matter where in the second row we start, but by convention we will start with the column drawn on the leftmost side. Once all lights in the first row are turned off, we proceed to turn off the lights in the second row by pressing the lights in the third. We continue with this method until we press the buttons in the last row. 

In \Cref{fig:OnePassEx} we show the one-pass chasing process on a cylindrical game in which the board has five rows, the lights have four states, and all lights start at the 3 state. The number in each star is the number of times we need to press that button to turn off the light above it. For example, we start by pressing each button in the second row one time to turn off all lights in the first row. We then need to press each button in the third row twice to turn off the buttons in the second row.

Notice that in this game, one-pass chasing works. After pressing the buttons in the last row all lights are off. When one-pass chasing turns off all lights we say the game is \textit{one-pass solvable}. Hence, the game shown in \Cref{fig:OnePassEx} is one-pass solvable. However, if we add a row  so that the board has six  rows, it is not hard to see that the game is no longer one-pass solvable because one-pass chasing no longer turns off all lights. 

\begin{figure}
    \centering
    \includegraphics[width=1.12in]{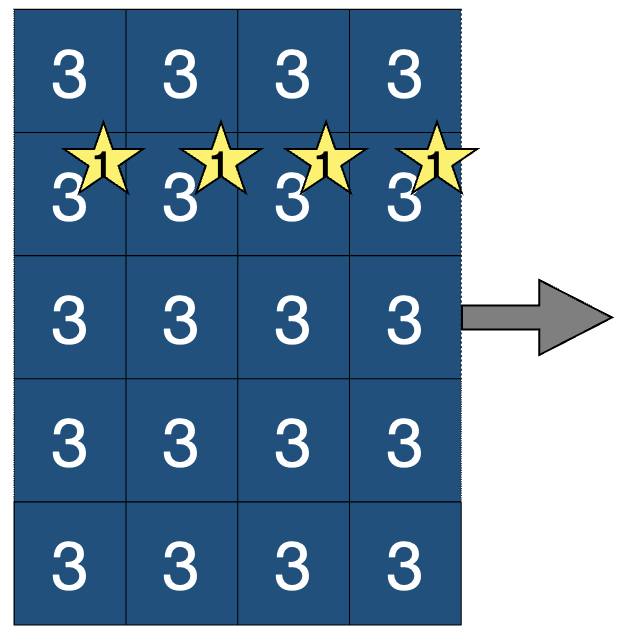}
    \includegraphics[width=1.12in]{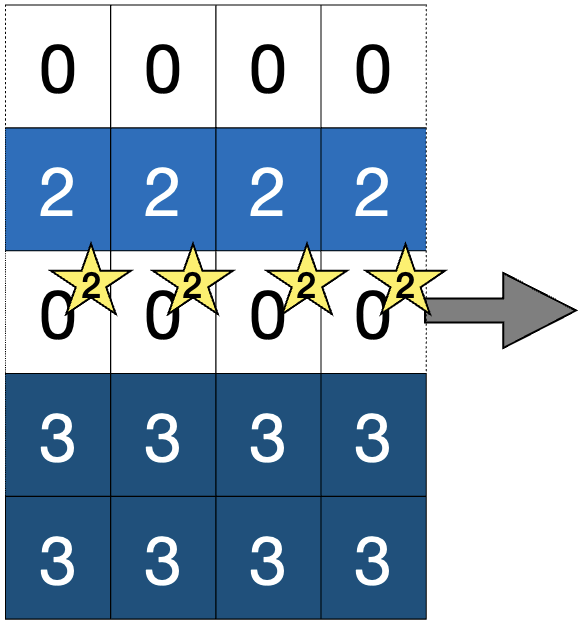}
    \includegraphics[width=1.12in]{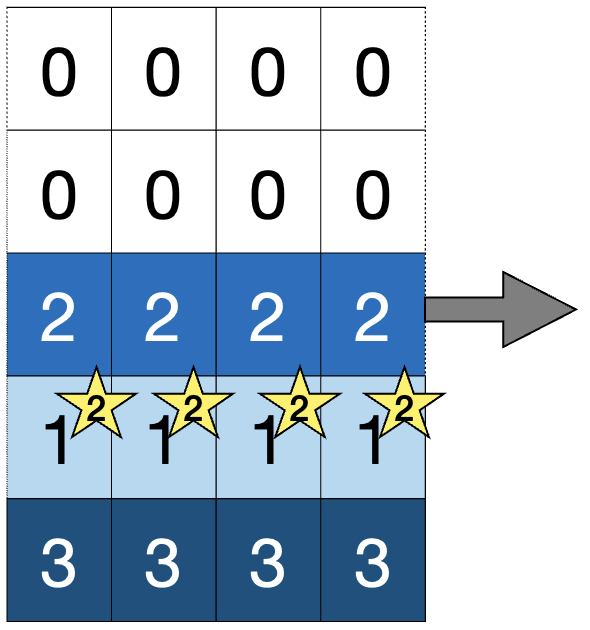}
    \includegraphics[width=1.12in]{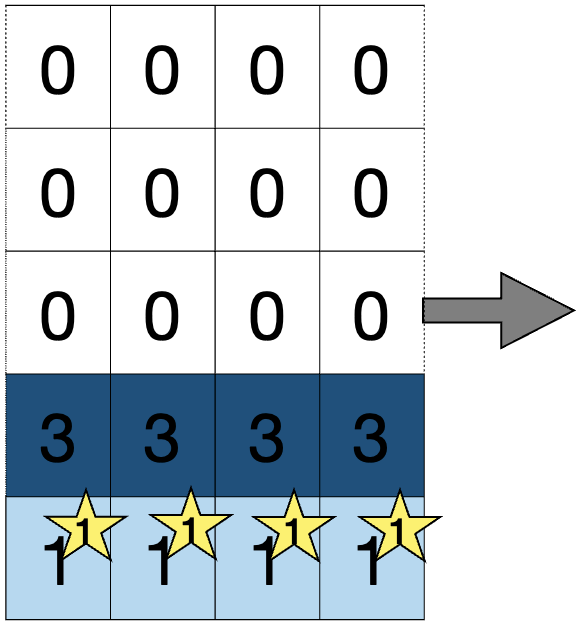}
    \includegraphics[width=1.12in]{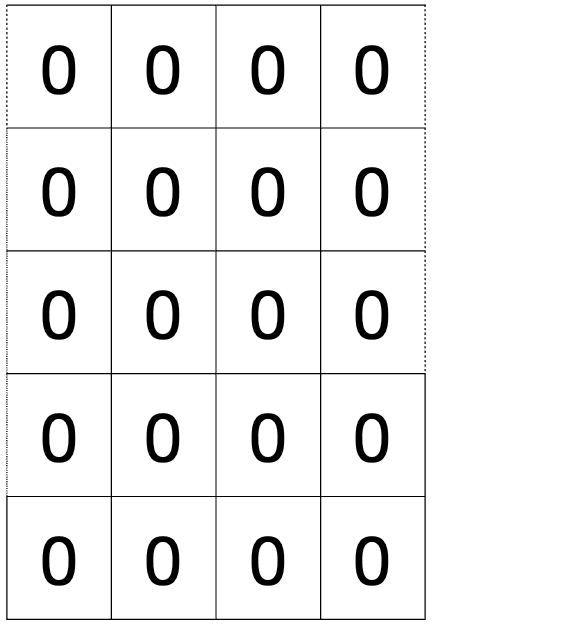}
    \caption{An example of one-pass chasing on a cylindrical Lights Out game in which the lights have four states and all lights start at the state of 3. The number in each star is the the number of times we press each button.}
    \label{fig:OnePassEx}
\end{figure}

\section{This Road Trip Game is Feeling Repetitive (A Recursive Model) }\label{sec:LightsBright}

In the remainder of this paper we analyze one-pass chasing on cylindrical Lights Out games in which the lights have $k$ states and all lights start at that same state, $k-q$, for $1 \le q \le k-1$. Since one-pass chasing is inherently recursive, we begin by building the recursion that gives the state of the lights in the $i^{th}$ row after turning off the lights in the $(i-1)^{st}$ row in the one-pass chasing process.

To build this recursion we need to keep track of the starting state of the lights in a row and  how many total times we  either press the button directly or we press a neighboring button that impacts the button; i,e., we need to keep track of  how many times we add one to the starting state during the one-pass  chasing process.  Since all lights start at the $k-q$ state and the grid is on a cylinder, the one-pass  chasing process adds the same amount to each light in a row.  Hence, so long as the board has at least three columns, whether or not a game is one-pass solvable is not affected by the number of columns of the board; it depends only on the number of rows.

We let $S_i$ be the state of the lights in the $i^{th}$ row after turning off the lights in the $(i-1)^{st}$ row and develop a formula for $S_i$ as follows. First, since the lights have $k$ states and begin at the $k-q$ state (or, equivalently, the $-q$ state), $S_i$ starts at $-q$. Then $S_i$ is affected when we press the buttons in the $(i-1)^{st}$ row to turn off the lights in the $(i-2)^{nd}$ row. We need to press the buttons in the $(i-1)^{st}$ row enough times so that the state of the lights in the $(i-2)^{nd}$ row becomes 0. We think of this number as $-S_{i-2}$. Similarly, $S_i$ is affected when we press the buttons in the $i^{th}$ row itself to turn off the lights in the $(i-1)^{st}$ row, and the number of times we need to press the buttons in the $i^{th}$ row is $-S_{i-1}$. Finally, $S_i$ is affected when we press the buttons on either side to turn off the lights above. The number of times we press these buttons is also $-S_{i-1}$. Thus, the following recursion gives the state of the lights in the $i^{th}$ row after turning off the lights in the $(i-1)^{st}$ row.

$$S_i =  -q-S_{i-2}-3S_{i-1},\ S_0=0,\ S_1=-q.$$

When the lights start at the $-1$ state (which we think of as the brightest state), the recursion becomes $S_i=-1-S_{i-2}-3S_{i-1}$ with $S_0=0$ and $S_1=-1$. The first 11 terms of this recursion are shown in \Cref{table:BiAllOn}.

\begin{table}[h]
\begin{tabular}{|c|c|c|c|c|c|c|c|c|c|c|c|}
\hline
$i$   & 0 & 1  & 2 & 3  & 4  & 5   & 6   & 7    & 8   & 9     & 10   \\ \hline
$S_i$ & 0 & -1 & 2 & -6 & 15 & -40 & 104 & -273 & 714 & -1870 & 4895 \\ \hline
\end{tabular}
\caption{The first 11 terms of the recursion $S_i=-1-S_{i-2}-3S_{i-1}$ for the game that begins with all lights set to the brightest state}
\label{table:BiAllOn}
\end{table}

Since the recursion $S_i$ gives the state of the lights in the $i^{th}$ row after pressing the buttons in the $i^{th}$ row in the one-pass process, a term like $S_4=15$ means that the lights in the fourth row start at $-1$ and are either pressed directly or are affected by pressing a neighboring button 16 times. The lights in the fourth row will be turned off if the lights reach the 0 state after pressing all lights in the row. Thus, if the lights have $k$ states, then the lights reach the 0 state if $S_4$ is divisible by $k$. This brings us to the following Key Fact.

\begin{KeyFact}\label{fact:KeyFact} A cylindrical game with $i$ rows of lights and in which the lights have $k$ states is one-pass solvable if and only if $S_i \equiv 0$ (mod $k$). 
\end{KeyFact}
For example, using \Cref{table:BiAllOn}, if the lights have five states and all lights start at the brightest state, then the game is one-pass solvable if the board has four, five, nine, or ten rows because $S_4$, $S_5$, $S_9$, and $S_{10}$ are divisible by five. On the other hand, if the lights start at the brightest state but only have two states, then the game is not one-pass solvable if the board has four or seven rows because $S_4$ and $S_7$ are both odd.

\section{Fibonacci Gets in the Car (Lights Outs Ties to the Fibonacci Sequence)}

Appealing to the Key Fact, our goal is now to determine when $S_i$ is congruent to 0 (mod $k$). To do this  we develop a relationship between the recursion $S_i$ and the Fibonacci sequence. The Fibonacci sequence has be extensively studied mod $k$ (see for example, \cite{renault}, \cite{ryder}, and \cite{wall}), and thus once we relate $S_i$ to the Fibonacci sequence, we will be able to use known results about the Fibonacci sequence to identify $i$'s for which $S_i \equiv 0$ (mod $k$). This will tell us how many rows a board should have in order for a game with $k$ state for the lights to be one-pass solvable.

Recall that the Fibonacci sequence is defined by $F_0=0$, $F_1=1$, and $F_i=F_{i-1}+F_{i-2}$. The first 13 terms of the Fibonacci sequence are shown in \Cref{table:Fib}. Comparing the terms $F_i$ in \Cref{table:Fib} with the  terms of $S_i$ with $q=1$  in \Cref{table:BiAllOn}, we see that $S_i = (-1)^iF_iF_{i+1}$ for $ 0 \le i\le 9$. More generally, \Cref{thm:RelatetoFib} shows that for any $1 \le q \le k-1$ and all $i$, $S_i=(-1)^iqF_iF_{i+1}$. The proof of \Cref{thm:RelatetoFib} requires the following identity.

\begin{Cassini} Let $F_i$ be the $i^{th}$ Fibonacci number. Then
    $$F_{i-1} F_{i+1} - F^2_{i} = (-1)^i.$$
\end{Cassini}


\begin{theorem}\label{thm:RelatetoFib} Let  $S_i = - q - S_{i-2} - 3S_{i-1}$ with $S_0=0$ and $S_1=-q$, and let $F_i$ be the \(i^{th}\) Fibonacci number.  Then $$S_i = (-1)^i q F_i F_{i+1}.$$

\end{theorem}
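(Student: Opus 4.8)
The plan is to prove the formula by strong induction on $i$, which is natural because the recursion defining $S_i$ reaches back two steps to $S_{i-1}$ and $S_{i-2}$. First I would dispatch the two base cases by direct computation: for $i=0$ the claimed value is $(-1)^0 q F_0 F_1 = q\cdot 0 \cdot 1 = 0 = S_0$, and for $i=1$ it is $(-1)^1 q F_1 F_2 = -q\cdot 1 \cdot 1 = -q = S_1$, both matching the given initial conditions.

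For the inductive step I would fix $i \ge 2$ and assume $S_{i-1} = (-1)^{i-1} q F_{i-1}F_i$ and $S_{i-2} = (-1)^{i-2} q F_{i-2}F_{i-1}$. Substituting these into $S_i = -q - S_{i-2} - 3S_{i-1}$ and using $(-1)^{i-2} = (-1)^i$ together with $(-1)^{i-1} = -(-1)^i$ to consolidate the signs, I would arrive at
$$S_i = -q + (-1)^i q\left(3F_{i-1}F_i - F_{i-2}F_{i-1}\right).$$
The task is then to show this equals $(-1)^i q F_i F_{i+1}$. Cancelling the common factor $q$ and multiplying through by $(-1)^i$ turns the goal into the purely Fibonacci statement
$$F_i F_{i+1} + F_{i-2}F_{i-1} - 3F_{i-1}F_i = (-1)^{i+1}.$$

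To settle this identity I would rewrite everything in terms of $F_{i-1}$ and $F_i$ using $F_{i+1} = F_i + F_{i-1}$ and $F_{i-2} = F_i - F_{i-1}$; the left-hand side then collapses to $F_i^2 - F_{i-1}F_i - F_{i-1}^2$. This is precisely $(-1)^{i+1}$: expanding $F_{i-1}F_{i+1} = F_{i-1}F_i + F_{i-1}^2$ in Cassini's Identity $F_{i-1}F_{i+1} - F_i^2 = (-1)^i$ and negating both sides yields exactly $F_i^2 - F_{i-1}F_i - F_{i-1}^2 = (-1)^{i+1}$, which closes the induction.

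The step I expect to be the crux, and the reason Cassini's Identity is stated just before the theorem, is this final reduction: the algebra of the inductive step does not simplify on its own but is resolved only by recognizing the Cassini expression emerging from the collected Fibonacci products. Everything else amounts to careful sign-tracking and routine use of the Fibonacci recurrence, so the argument is short once the connection to Cassini is spotted.
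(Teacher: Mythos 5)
Your proof is correct and follows essentially the same route as the paper's: induction on $i$, substitution of the inductive hypothesis into the recursion, and Cassini's Identity to resolve the resulting Fibonacci algebra. If anything, yours is slightly tidier --- you verify both base cases $i=0$ and $i=1$ (the paper checks only $i=0$, though its inductive step needs both initial conditions), and you cleanly isolate the standalone identity $F_i^2 - F_{i-1}F_i - F_{i-1}^2 = (-1)^{i+1}$ before invoking Cassini, whereas the paper expands the target $(-1)^{n+1}qF_{n+1}F_{n+2}$ and manipulates it toward the recursion side.
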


\begin{proof} We  use induction.  For the base case, we see that $S_0=0$ and $(-1)^0q F_0F_1= 1 \times q\times 0\times 1 =0$.

For the inductive step, let $n$ be an arbitrary natural number and assume that $S_i = (-1)^i q F_i F_{i+1}$   for all \( i \leq n \). We show that $S_{n+1} = (-1)^{n+1} q F_{n+1} F_{n+2}$.

By definition, \( S_{n+1} = -q - S_{n-1} - 3S_n \). By the inductive hypothesis, $S_{n-1} =(-1)^{n-1} q F_{n-1} F_n $ and $S_n = (-1)^n q F_{n} F_{n+1} $, and thus $$S_{n+1} = -q-(-1)^{n-1}q F_{k-1} F_n-3(-1)^n q F_{n} F_{n+1}.$$ Hence, we need to show
\begin{align}
    (-1)^{n+1} q F_{k+1} F_{k+2} &= - q - (-1)^{n-1}q F_{n-1} F_{n} - 3(-1)^n q F_{n} F_{n+1}.
\end{align}

Using the definition of $F_{n+1}$ and $F_{n+2}$ we see that
\begin{align}
    (-1)^{n+1}q F_{n+1} F_{n+2} &= (-1)^{n+1} q(F_{n} + F_{n-1})(F_{n+1} + F_{n}) \nonumber \\
    &= (-1)^{n+1}q (F_{n}F_{n+1} + F_{n}^2 + F_{n-1} F_{n+1} + F_{n-1} F_{n}).
\end{align}

Using Cassini's Identity, we replace the term \(F_{n-1} F_{n+1}\) in (2) with \(F_{n}^2 + (-1)^n\), so that Expression ($2$) becomes
\begin{align}
    (-1)^{n+1}q (F_{n}F_{n+1} + 2F_{n}^2 + (-1)^n + F_{n-1} F_{n}).
\end{align}

Since \(F_{n} = F_{n+1} - F_{n-1}\), it follows that $2F_n^2 = 2F_{n}(F_{n+1} - F_{n-1}) = 2F_{n} F_{n+1} - 2F_{n} F_{n-1}$. Thus,  Expression ($3$) becomes
\begin{align*}
    (-1)^{n+1}q (F_{n}F_{n+1} + 2F_{n} F_{n+1} - 2F_{n} F_{n-1} + (-1)^n + F_{n-1} F_{n}).
\end{align*}

Regrouping and distributing the $(-1)^{n+1}$ gives
\begin{align*}
    (-1)^{n+1} q ((-1)^n-F_nF_{n-1}+3F_nF_{n+1})&= q( (-1)^{2n+1}-(-1)^{n+1}F_nF_{n-1} +3(-1)^{n+1}F_nF_{n+1})\\
    &= -q-(-1)^{n-1}qF_{n-1}F_{n}-3q(-1)^{n}F_nF_{n+1} \\ 
\end{align*}as desired in Equation (1).\end{proof}

We can now determine board sizes for which the cylindrical game with $k$ states, where all buttons start in state $-q$, is one-pass solvable by studying when the product $F_iF_{i+1}$ is congruent 0 (mod $k$). For example, it is well-known that $F_i$ is even if $i \equiv 0$ (mod 3) and odd if $i \equiv 1$ or $i\equiv 2$ (mod 3). Thus, the game in which the lights have only two states (and hence only one possible starting state, $-1$) is one-pass solvable if and only if the number of rows of the board is congruent to 0 or 1 (mod 3).

The study of the Fibonacci sequence (mod $k$) began  with Wall's 1960 paper \textit{Fibonacci Series Modulo $m$} \cite{wall}. In particular, Wall showed that the Fibonacci sequence (mod $k$) is periodic (mod $k$) (Theorem 1, \cite{wall}) and studied when the terms of the Fibonacci sequence are congruent to 0 (mod $k$). Wall defined the \textit{restricted period} of the Fibonacci sequence (mod $k$), denoted $\alpha(k)$, to be the least positive integer such that $F_{\alpha(k)}\equiv 0$ (mod $k$), and proved that if we know $\alpha(k)$ then we can determine all $F_i$ such that $F_i \equiv 0$ (mod $k$).

\begin{theorem}[Theorem 3, \cite{wall}]\label{lem:Fib0} Let $F_i$ be the $i^{th}$ term of the Fibonacci sequence and let $\alpha(k)$ be the restricted period (mod $k$). Then $F_i \equiv 0$ (mod $k$) if and only if $i \equiv 0$ (mod $\alpha(k)$).
\end{theorem}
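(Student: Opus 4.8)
The plan is to prove both implications using the Fibonacci addition formula
$$F_{m+n} = F_m F_{n+1} + F_{m-1} F_n,$$
which holds for all $m \ge 1$ and $n \ge 0$ and can be established by a routine induction on $n$ (or read off from the powers of the matrix of the Fibonacci recurrence). For brevity write $\alpha = \alpha(k)$, so that $F_\alpha \equiv 0$ (mod $k$) and $\alpha$ is the least positive index with this property.

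First I would handle the ``if'' direction: assuming $\alpha \mid i$, say $i = m\alpha$, I would show $F_{m\alpha} \equiv 0$ (mod $k$) by induction on $m$. The base cases $m = 0$ (where $F_0 = 0$) and $m = 1$ (where $F_\alpha \equiv 0$ by definition) are immediate. For the inductive step I would apply the addition formula to the decomposition $m\alpha = (m-1)\alpha + \alpha$, obtaining
$$F_{m\alpha} = F_{(m-1)\alpha}\, F_{\alpha+1} + F_{(m-1)\alpha - 1}\, F_\alpha.$$
Here $F_{(m-1)\alpha} \equiv 0$ (mod $k$) by the inductive hypothesis and $F_\alpha \equiv 0$ (mod $k$) by definition of $\alpha$, so each summand vanishes mod $k$ and therefore $F_{m\alpha} \equiv 0$ (mod $k$).

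For the ``only if'' direction, suppose $F_i \equiv 0$ (mod $k$) and write $i = q\alpha + r$ with $0 \le r < \alpha$ by the division algorithm; the goal is to force $r = 0$. Applying the addition formula to $i = r + q\alpha$ gives
$$F_i = F_r\, F_{q\alpha + 1} + F_{r-1}\, F_{q\alpha},$$
and since $F_{q\alpha} \equiv 0$ (mod $k$) by the ``if'' direction just proved, this reduces to $F_i \equiv F_r\, F_{q\alpha+1}$ (mod $k$). The crux of the argument, and the step I expect to be the main obstacle, is showing that $F_{q\alpha+1}$ is invertible mod $k$, so that $F_i \equiv 0$ forces $F_r \equiv 0$ (mod $k$). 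To establish invertibility I would invoke Cassini's Identity: because $F_\alpha \equiv 0$, the identity $F_{\alpha+1} F_{\alpha-1} - F_\alpha^2 = (-1)^\alpha$ reduces to $F_{\alpha+1} F_{\alpha-1} \equiv (-1)^\alpha$ (mod $k$), so $F_{\alpha+1}$ is a unit mod $k$. I would then promote this to $F_{q\alpha+1}$ by observing $F_{q\alpha+1} \equiv F_{\alpha+1}^{\,q}$ (mod $k$), which follows from iterating the addition formula (equivalently, from raising the diagonal-mod-$k$ Fibonacci matrix to the $q$th power); a power of a unit is again a unit. Once $F_{q\alpha+1}$ is known to be invertible, $F_i \equiv 0$ gives $F_r \equiv 0$ (mod $k$) with $0 \le r < \alpha$, which contradicts the minimality of $\alpha$ unless $r = 0$. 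Hence $r = 0$, so $i = q\alpha$, that is, $\alpha \mid i$, completing the proof.
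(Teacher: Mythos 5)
Your proposal is correct, but there is nothing in the paper to compare it against: the paper does not prove this statement at all, it simply quotes it as Theorem 3 of Wall's 1960 paper \cite{wall} and uses it as a black box. So your argument should be judged as a self-contained replacement for the citation, and as such it works. Both directions are sound: the ``if'' direction by induction on $m$ via the addition formula $F_{m+n} = F_m F_{n+1} + F_{m-1} F_n$, and the ``only if'' direction by division with remainder, reducing $F_i \equiv F_r F_{q\alpha+1}$ (mod $k$) and then killing $F_r$ by showing $F_{q\alpha+1}$ is a unit. The unit argument is the one genuinely non-obvious step and you handle it correctly: Cassini's Identity (which the paper already states and uses elsewhere) gives $F_{\alpha+1}F_{\alpha-1} \equiv (-1)^{\alpha}$ (mod $k$), so $F_{\alpha+1}$ is invertible, and the congruence $F_{q\alpha+1} \equiv F_{\alpha+1}^{\,q}$ (mod $k$), which follows by the same addition-formula induction you describe, promotes this to $F_{q\alpha+1}$. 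Two cosmetic points you should tidy if this were written out in full: in the ``only if'' direction the addition formula as you stated it requires $r \ge 1$, so dispose of the case $r = 0$ first (it is the desired conclusion anyway, and otherwise your formula involves $F_{r-1} = F_{-1}$); and the congruence $F_{q\alpha+1} \equiv F_{\alpha+1}^{\,q}$ deserves its own short induction rather than the phrase ``iterating the addition formula.'' Neither is a gap in the mathematics.
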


For example, \Cref{table:Fib} shows the first 13 terms of the original Fibonacci sequence along with the  sequence taken mod 3 and mod 4. Notice that $F_4 \equiv 0$ (mod 3). Thus,  $\alpha(3)=4$, and  by \Cref{lem:Fib0}, $F_i\equiv 0$ (mod 3) if and only if $i\equiv 0$ (mod 4). Taking the Fibonacci sequence mod 4, we see that the first 0 occurs at $F_6$. Hence,  $\alpha(4)=6$ and $F_i \equiv 0$ (mod 4) if and only if $i\equiv 0$ (mod 6). 

\begin{table}[h]
\begin{tabular}{|c|c|c|c|c|c|c|c|c|c|c|c|c|c|c|c|c|}
\hline
$i$           & 0 & 1 & 2 & 3 & 4 & 5 & 6 & 7  & 8  & 9  & 10 & 11 & 12 &13 & 14& 15  \\ \hline
$F_i$         & 0 & 1 & 1 & 2 & 3 & 5 & 8 & 13 & 21 & 34 & 55 & 89 & 144 & 233 & 377 & 610 \\ \hline
$F_i$ (mod 3) & 0 & 1 & 1 & 2 & \textbf{0} &2 & 2 & 1  & \textbf{0}  & 1  & 1  & 2  & \textbf{0} & 2 & 2 & 1  \\ \hline
$F_i$ (mod 4) & 0 & 1 & 1 & 2 & 3 & 1 & \textbf{0} & 1  & 1  & 2  & 3  & 1  & \textbf{0}  & 1 & 1 & 2  \\ \hline
\end{tabular}
\caption{The first 16 terms of the Fibonacci sequence $F_i$, the Fibonacci sequence (mod 3), and the Fibonacci sequence (mod 4). Notice that $\alpha (3)=4$ and $\alpha(4)=6$.}
\label{table:Fib}
\end{table}

 Since $S_i = (-1)^iqF_iF_{i+1}$, we can leverage the restricted period of the Fibonacci numbers to determine board sizes for which a game with $k$ states is one-pass solvable. 

\begin{theorem}\label{lem:nsatResPer}
 Let $\alpha(k)$ be the restricted period of the Fibonacci sequence (mod $k$). Then the cylindrical Lights Out game with $i$ rows, $k$ light states, and in which all lights start at the same state is one-pass solvable if $i \equiv 0$ (mod $\alpha(k)$) or $i \equiv -1$ (mod $\alpha(k)$).
\end{theorem}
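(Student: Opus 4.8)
The plan is to chain together the three tools already in hand: the Key Fact, \Cref{thm:RelatetoFib}, and \Cref{lem:Fib0} (Wall's Theorem 3). By the Key Fact, the game with $i$ rows is one-pass solvable precisely when $S_i \equiv 0 \pmod{k}$, and by \Cref{thm:RelatetoFib} we know the closed form $S_i = (-1)^i q F_i F_{i+1}$. So the first step is to observe that since $(-1)^i \in \{1, -1\}$ is a unit modulo $k$, multiplying by it does not affect divisibility by $k$; hence $S_i \equiv 0 \pmod{k}$ if and only if $q F_i F_{i+1} \equiv 0 \pmod{k}$. This reduces the problem entirely to a statement about the product $F_i F_{i+1}$.

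Next I would handle the two hypothesized cases separately, noting that the theorem asserts only a sufficient condition, so it suffices to make the product vanish modulo $k$ by killing a single factor. In the case $i \equiv 0 \pmod{\alpha(k)}$, \Cref{lem:Fib0} gives $F_i \equiv 0 \pmod{k}$ immediately, so $q F_i F_{i+1} \equiv 0 \pmod{k}$. In the case $i \equiv -1 \pmod{\alpha(k)}$, I would first rewrite this as $i + 1 \equiv 0 \pmod{\alpha(k)}$ and then apply \Cref{lem:Fib0} to the index $i+1$, obtaining $F_{i+1} \equiv 0 \pmod{k}$ and again $q F_i F_{i+1} \equiv 0 \pmod{k}$. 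In either situation $S_i \equiv 0 \pmod{k}$, and the Key Fact then yields one-pass solvability.

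Honestly there is no serious obstacle here: once the reduction to $q F_i F_{i+1} \equiv 0 \pmod{k}$ is in place, the argument is a direct application of Wall's theorem to one of the two consecutive Fibonacci indices. The only point requiring a word of care is the unit observation for $(-1)^i$, and I would flag that we are not claiming a converse. In particular, because $q$ may share factors with $k$ and because $F_i F_{i+1}$ could a priori vanish modulo a composite $k$ without either factor vanishing, the biconditional statement would require more work; restricting to the stated sufficient direction lets us bypass all of that and keep the proof to a few lines.
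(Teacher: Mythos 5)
Your proposal is correct and follows essentially the same route as the paper: apply the Key Fact, substitute the closed form $S_i = (-1)^i q F_i F_{i+1}$ from \Cref{thm:RelatetoFib}, and use \Cref{lem:Fib0} to conclude that $F_i$ (when $i \equiv 0 \pmod{\alpha(k)}$) or $F_{i+1}$ (when $i \equiv -1 \pmod{\alpha(k)}$) vanishes modulo $k$. In fact your write-up is slightly more careful than the paper's two-line proof, which leaves the appeal to Wall's theorem and the unit status of $(-1)^i$ implicit.
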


\begin{proof} By the Key Fact, a  game with $k$ states and $i$ rows is one-pass solvable if $S_i \equiv 0$ (mod $k$). Since $S_i =(-1)^iqF_iF_{i+1}$, it follows that $S_i \equiv 0$ if either $F_i$ or $F_{i+1}$ is congruent to 0 (mod $k$). 
\end{proof}

It is tempting to read \Cref{lem:nsatResPer} as an if and only if statement and conclude that a game with  $k$ states and $i$ rows is only one-pass solvable when either $F_i$ or $F_{i+1}$ is congruent to 0 (mod $k$). Unfortunately, this is not always the case. For example, when $k=6$ we see that $F_2F_3 = 2\cdot 3 \equiv 0$ (mod 6) yet neither $F_2$ nor $F_3$ is congruent to 0 (mod 6). Similarly, if the lights have $k=6$ states and all start at the state of $q=3$, then $S_6=3\cdot F_6F_7=312\equiv 0$ (mod 6) but neither $F_6$ nor $F_7$ are congruent to 0 (mod 6). However, when $\mathbb{Z}_k$ is an integral domain (and thus has no zero divisors) then the reverse implication of \Cref{lem:nsatResPer} holds.

\begin{theorem}\label{cor:noZeroDiv} Let $\alpha(k)$ be the restricted period of the Fibonacci sequence (mod $k$). If $\mathbb{Z}_k$ is an integral domain, then the cylindrical Lights Out game with $i$ rows, $k$ lights states, and in which all lights start at the same state is one-pass solvable if and only if $i \equiv 0$ (mod $\alpha(k)$) or $i \equiv -1$ (mod $\alpha(k)$).
\end{theorem}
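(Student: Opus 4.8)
The plan is to observe that the forward implication is already established in \Cref{lem:nsatResPer}, so the entire content of the statement is its converse: I must show that whenever the game is one-pass solvable, the number of rows $i$ is forced to satisfy $i \equiv 0$ or $i \equiv -1 \pmod{\alpha(k)}$. The hypothesis that $\mathbb{Z}_k$ is an integral domain is exactly what lets me run the argument of \Cref{lem:nsatResPer} in reverse, since it supplies the cancellation that is missing in general.

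First I would translate ``one-pass solvable'' into a statement purely about Fibonacci numbers. By the Key Fact, solvability with $i$ rows is equivalent to $S_i \equiv 0 \pmod{k}$, and by \Cref{thm:RelatetoFib} we have $S_i = (-1)^i q F_i F_{i+1}$. Hence solvability is equivalent to $(-1)^i q F_i F_{i+1} \equiv 0 \pmod{k}$, and because $(-1)^i$ is a unit this is the same congruence as $q F_i F_{i+1} \equiv 0 \pmod{k}$.

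Next I would exploit the absence of zero divisors twice. Since the lights start in state $-q$ with $1 \le q \le k-1$, we have $q \not\equiv 0 \pmod{k}$; as $\mathbb{Z}_k$ has no zero divisors, the congruence $q F_i F_{i+1} \equiv 0$ forces $F_i F_{i+1} \equiv 0 \pmod{k}$, and applying the no-zero-divisor property once more forces $F_i \equiv 0$ or $F_{i+1} \equiv 0 \pmod{k}$. This is precisely the step that fails without the hypothesis, and it explains the $k=6$ examples noted just above the statement: in the presence of zero divisors a product can vanish mod $k$ while neither factor does.

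Finally I would invoke Wall's result, \Cref{lem:Fib0}: $F_i \equiv 0 \pmod{k}$ if and only if $i \equiv 0 \pmod{\alpha(k)}$, and $F_{i+1} \equiv 0 \pmod{k}$ if and only if $i+1 \equiv 0$, that is $i \equiv -1 \pmod{\alpha(k)}$. Combining the two cases yields exactly $i \equiv 0$ or $i \equiv -1 \pmod{\alpha(k)}$, which completes the converse and hence the biconditional. There is no substantial obstacle here beyond keeping the bookkeeping straight; the one point deserving care is to apply the no-zero-divisor property in two separate stages---once to strip the factor $q$ and once to split the product $F_i F_{i+1}$---rather than silently treating $q$ as a unit, so that the conclusion reads transparently from the integral-domain hypothesis alone.
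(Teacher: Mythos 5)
Your proposal is correct and takes essentially the approach the paper intends: the paper states this theorem without a formal proof, justifying it only by the preceding remark that when $\mathbb{Z}_k$ is an integral domain (hence has no zero divisors) the reverse implication of \Cref{lem:nsatResPer} holds. Your write-up supplies exactly that argument in full detail---reducing solvability to $qF_iF_{i+1} \equiv 0 \pmod{k}$ via the Key Fact and \Cref{thm:RelatetoFib}, applying the no-zero-divisor property twice (to cancel $q$ and then to split the product), and finishing with \Cref{lem:Fib0}---including the two-stage cancellation that the paper's $k=6$, $q=3$ example shows is genuinely needed.
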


 For example, $\mathbb{Z}_3$ is an integral domain and from \Cref{table:Fib} we see that $\alpha(3)=4$. Therefore, a game with $i$ rows, 3 states for the lights, and in which all lights start at the same state is one-pass solvable  if and only if $i \equiv 0$ (mod 4) or $i\equiv 3$ (mod 4).

We end with a discussion of computing $\alpha(k)$, which unfortunately, does not come equipped with a nice formula. When $k$ is small we can directly compute $\alpha(k)$ by writing out the Fibonacci sequence (mod $k$) and noting the location of the first 0. For example, from \Cref{table:Fib} we see that $F_5$ is the first term of the sequence divisible by 5.  Hence, $\alpha(5)=5$. If $\alpha(k)$ is too large to compute directly, there are a few results that allow us to compute $\alpha(k)$ if we know $\alpha(p)$ for all prime $p$ that divide $k$. In particular, we rely on the following theorems from \cite{renault} and \cite{wall}.

\begin{theorem}[Theorem 1, \cite{renault}]\label{thm:lcm}
    Let $\text{lcm}$ denote the least common multiple and let $\alpha(k)$ be the restricted period of the Fibonacci sequence (mod $k$). Then $$\alpha(\text{lcm}(k_1,k_2)) = \text{lcm}(\alpha(k_1),\alpha(k_2)).$$
\end{theorem}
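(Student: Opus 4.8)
The plan is to reduce everything to Wall's characterization in \Cref{lem:Fib0}, which says that $F_i \equiv 0 \pmod{k}$ exactly when $\alpha(k)$ divides $i$. The bridge between the two sides of the claimed identity is the defining property of the least common multiple: for any integer $N$ we have $\lcm(k_1,k_2) \mid N$ if and only if both $k_1 \mid N$ and $k_2 \mid N$. Applying this with $N = F_i$ turns a single divisibility condition modulo $\lcm(k_1,k_2)$ into a pair of conditions modulo $k_1$ and $k_2$, each of which \Cref{lem:Fib0} converts into a statement about restricted periods.

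Concretely, I would set $m = \lcm(k_1,k_2)$ and $L = \lcm(\alpha(k_1),\alpha(k_2))$, then prove $\alpha(m) = L$ by showing each divides the other. For $\alpha(m) \mid L$: since $\alpha(k_1) \mid L$ and $\alpha(k_2) \mid L$, \Cref{lem:Fib0} gives $F_L \equiv 0 \pmod{k_1}$ and $F_L \equiv 0 \pmod{k_2}$, so $k_1 \mid F_L$ and $k_2 \mid F_L$, whence $m \mid F_L$ by the lcm property, i.e.\ $F_L \equiv 0 \pmod{m}$; applying \Cref{lem:Fib0} once more, now to the modulus $m$, yields $\alpha(m) \mid L$. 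For the reverse direction $L \mid \alpha(m)$: from $F_{\alpha(m)} \equiv 0 \pmod{m}$ we get $k_1 \mid F_{\alpha(m)}$ and $k_2 \mid F_{\alpha(m)}$, so \Cref{lem:Fib0} forces $\alpha(k_1) \mid \alpha(m)$ and $\alpha(k_2) \mid \alpha(m)$, and therefore $L \mid \alpha(m)$. Since $\alpha(m)$ and $L$ are positive integers that divide one another, they are equal.

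There is no genuinely hard step here; the argument is a clean translation of divisibility through \Cref{lem:Fib0}. The one point that requires care — and the closest thing to an obstacle — is invoking the lcm property in the correct direction at each use. The ``easy'' half ($m \mid F_L$ from $k_1 \mid F_L$ and $k_2 \mid F_L$) is exactly the statement that every common multiple of $k_1$ and $k_2$ is a multiple of their least common multiple, and it is precisely this fact, rather than any coprimality or Chinese Remainder argument, that lets the result hold for arbitrary $k_1$ and $k_2$ with no assumption that they be relatively prime.
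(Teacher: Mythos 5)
Your argument is correct, but there is nothing in the paper to compare it against: the paper does not prove \Cref{thm:lcm} at all, it imports the statement wholesale from Renault (Theorem 1 of that reference), just as it imports \Cref{lem:Fib0} from Wall. What your proposal actually does is show that the Renault citation is redundant given the Wall citation: both divisibilities $\alpha(m) \mid L$ and $L \mid \alpha(m)$ (with $m = \lcm(k_1,k_2)$ and $L = \lcm(\alpha(k_1),\alpha(k_2))$) follow from \Cref{lem:Fib0} together with the universal property of the least common multiple, and each step of your chain checks out --- including the implicit use of $k_1 \mid m$ when you pass from $m \mid F_{\alpha(m)}$ to $k_1 \mid F_{\alpha(m)}$, and the correct directional use of the lcm property ($k_1 \mid F_L$ and $k_2 \mid F_L$ imply $m \mid F_L$). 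You are also right that no coprimality or Chinese Remainder hypothesis is needed anywhere. So the practical difference between your route and the paper's is self-containedness: the paper treats the lcm formula as a black box from the literature, while your argument makes it a short corollary of the one result (\Cref{lem:Fib0}) the paper already states, which is the cleaner way to present it if one wanted the exposition to rest on a single imported theorem rather than two.
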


\begin{theorem}[Theorem 5, \cite{wall}]\label{thm:alphaprime}
    If $p$ is  an  odd prime and $\alpha(p^2) \ne \alpha(p)$ then $$\alpha(p^s)=p^{s-1}\alpha(p).$$
\end{theorem}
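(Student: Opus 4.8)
The plan is to reduce everything to an exact formula for the power of $p$ dividing each Fibonacci number, and then read $\alpha(p^s)$ off from it. Write $v_p(\cdot)$ for the $p$-adic valuation, set $e := \alpha(p)$, and let $t := v_p(F_e) \ge 1$. Since $p^s \mid F_{\alpha(p^s)}$ implies $p \mid F_{\alpha(p^s)}$, \Cref{lem:Fib0} (applied with modulus $p$) gives $e \mid \alpha(p^s)$; more generally, any index $n$ with $p \mid F_n$ is a multiple of $e$. Thus the first $n$ with $p^s \mid F_n$ must have the form $n = em$, and it suffices to control $v_p(F_{em})$.

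First I would establish the Fibonacci ``lifting-the-exponent'' identity $v_p(F_{em}) = t + v_p(m)$ for odd $p$. Using the matrix $Q = \left(\begin{smallmatrix}1 & 1\\ 1 & 0\end{smallmatrix}\right)$, whose powers satisfy $Q^n = F_{n-1}I + F_nQ$ and have off-diagonal entry $F_n$ (both immediate by induction), I raise $Q^e = F_{e-1}I + F_e Q$ to the $m$-th power and expand:
$$ Q^{em} = (F_{e-1}I + F_e Q)^m = \sum_{j=0}^{m}\binom{m}{j}F_{e-1}^{\,m-j}F_e^{\,j}\,Q^j. $$
Comparing off-diagonal entries (and dropping $j=0$ since $F_0 = 0$) yields
$$ F_{em} = \sum_{j=1}^{m}\binom{m}{j}F_{e-1}^{\,m-j}F_e^{\,j}F_j. $$
Here $v_p(F_e) = t$ while $p \nmid F_{e-1}$, because consecutive Fibonacci numbers are coprime; hence the $j=1$ term $m F_{e-1}^{\,m-1}F_e$ has valuation exactly $v_p(m) + t$.

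The main obstacle, and the sole place the oddness of $p$ enters, is showing that every term with $j \ge 2$ has strictly larger valuation, so that the $j=1$ term alone determines $v_p(F_{em})$. From $\binom{m}{j} = \tfrac{m}{j}\binom{m-1}{j-1}$ one gets $v_p\!\big(\binom{m}{j}F_e^{\,j}\big) \ge v_p(m) - v_p(j) + jt$, so it suffices to verify $(j-1)t > v_p(j)$ for all $j \ge 2$. As $t \ge 1$, it is enough that $j - 1 > v_p(j)$, i.e. $p^{\,j-1} > j$, which holds for every $j \ge 2$ once $p \ge 3$. (At $p = 2$, $j = 2$ this becomes an equality, so the $j=2$ term can tie the $j=1$ term, which is exactly why the hypothesis excludes $p=2$.) This proves the lemma.

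Finally I would translate the valuation formula into the statement about $\alpha$. Since $v_p(F_{em}) = t + v_p(m)$, the condition $p^s \mid F_{em}$ is equivalent to $v_p(m) \ge s - t$, and the smallest positive $m$ achieving it is $p^{\max(0,\,s-t)}$; hence $\alpha(p^s) = e\,p^{\max(0,\,s-t)}$. Taking $s = 1, 2$ shows that $\alpha(p^2) \ne \alpha(p)$ precisely when $\max(0, 2-t) \ge 1$, that is, when $t = 1$. Under the hypothesis $\alpha(p^2) \ne \alpha(p)$ we therefore have $t = 1$, whence $\alpha(p^s) = e\,p^{\,s-1} = p^{\,s-1}\alpha(p)$ for all $s$, as claimed.
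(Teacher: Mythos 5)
Your proof is correct, and there is nothing in the paper to compare it against line by line: the paper does not prove this statement at all, but imports it verbatim as Theorem 5 of Wall \cite{wall}. The classical route (Wall's) is structural: one shows that $\alpha(p^{s+1})$ is always either $\alpha(p^s)$ or $p\,\alpha(p^s)$, and that the hypothesis $\alpha(p^2)\ne\alpha(p)$ forces the multiplication by $p$ to occur at every level, the whole argument running by induction on $s$ through Fibonacci addition identities. Your route is the ``lifting the exponent'' argument: expanding $Q^{em}=(F_{e-1}I+F_eQ)^m$ binomially, isolating the $j=1$ term, and checking $(j-1)t>v_p(j)$ for $j\ge 2$ gives the exact valuation $v_p(F_{em})=v_p(F_e)+v_p(m)$ for odd $p$, which is strictly more information than the theorem asks for. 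In particular you obtain the closed formula $\alpha(p^s)=\alpha(p)\,p^{\max(0,\,s-t)}$ with $t=v_p(F_{\alpha(p)})$, which exposes the hypothesis $\alpha(p^2)\ne\alpha(p)$ as exactly the condition $t=1$ (so your argument also covers the exceptional primes with $t\ge 2$, whose existence is a well-known open question), and your observation that the inequality first fails at $p=2$, $j=2$ explains precisely why the prime $2$ needs the separate statement \Cref{thm:alph2}. Two small points to make explicit in a write-up: the conclusion that the $j=1$ term alone determines the valuation rests on the ultrametric property of $v_p$ applied to a sum with a unique term of minimal valuation, and the reduction to indices of the form $n=em$ uses \Cref{lem:Fib0} with modulus $p$, which is legitimate since that result is stated in the paper (also quoted from Wall) and is logically prior to the one being proved.
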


\begin{theorem}[Theorem 2, \cite{renault}]\label{thm:alph2}
   If $p=2$ and $s \ge 3$,  then $\alpha(2^s)=2^{s-3}\alpha(4) = 2^{s-3}\cdot 6$. 
\end{theorem}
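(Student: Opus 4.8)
The plan is to convert the statement into a question about the $2$-adic valuation of Fibonacci numbers. Write $v_2(m)$ for the exponent of the largest power of $2$ dividing $m$. By the definition of the restricted period, $\alpha(2^s)$ is simply the least positive $n$ with $2^s \mid F_n$, that is, the least $n$ for which $v_2(F_n) \ge s$. So everything reduces to understanding how $v_2(F_n)$ behaves as a function of $n$ and then minimizing over $n$.

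The key lemma I would prove is the valuation formula
\[
v_2(F_n) =
\begin{cases}
0 & \text{if } 3 \nmid n,\\
1 & \text{if } n \equiv 3 \pmod 6,\\
3 & \text{if } n \equiv 6 \pmod{12},\\
v_2(n)+2 & \text{if } n \equiv 0 \pmod{12}.
\end{cases}
\]
The first two cases are immediate from the already-available base values $\alpha(2)=3$ and $\alpha(4)=6$ together with \Cref{lem:Fib0}: $F_n$ is even exactly when $3 \mid n$, and $4 \mid F_n$ exactly when $6 \mid n$. For the remaining cases I would use the doubling identity $F_{2n} = F_n L_n$, where $L_n = F_{n-1}+F_{n+1}$ is the $n$th Lucas number, so that $v_2(F_{2n}) = v_2(F_n) + v_2(L_n)$. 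Feeding in the companion identity $L_n^2 = 5F_n^2 + 4(-1)^n$ lets me read off $v_2(L_n)$ on multiples of $3$: it equals $2$ when $n \equiv 3 \pmod 6$ and $1$ when $6 \mid n$. Starting from an odd multiple of $3$, where $v_2(F_n)=1$, and repeatedly doubling the index then gives $v_2(F_{2^a m}) = a+2$ for $a \ge 1$ and $3 \mid m$ odd, which is exactly the $v_2(n)+2$ claim.

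With the formula in hand, the theorem falls out by minimizing. For $s=3$, the least $n$ with $v_2(F_n) \ge 3$ is $n=6$ (furnished by the $n \equiv 6 \pmod{12}$ case), so $\alpha(8) = 6 = 6 \cdot 2^{0}$. For $s \ge 4$ we need $v_2(F_n) \ge s > 3$, and the formula shows this forces the regime $12 \mid n$ with $v_2(n)+2 \ge s$, i.e. $v_2(n) \ge s-2$; since we also need $3 \mid n$, the smallest admissible index is $\lcm(3, 2^{s-2}) = 3 \cdot 2^{s-2} = 6 \cdot 2^{s-3}$. The two cases combine to give $\alpha(2^s) = 6 \cdot 2^{s-3}$ for all $s \ge 3$.

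The main obstacle is establishing the valuation formula, and in particular the anomalous jump from $v_2(F_3)=1$ to $v_2(F_6)=3$ caused by $v_2(L_3) = v_2(4) = 2$. This single $+2$ jump is exactly why $p=2$ disobeys the clean odd-prime law $\alpha(p^s)=p^{s-1}\alpha(p)$ of \Cref{thm:alphaprime}, and it is the reason the stated formula only holds for $s \ge 3$ rather than for all $s$. I would also need to organize the valuation induction so that it never assumes the values $\alpha(8), \alpha(16), \dots$ it is trying to compute, keeping only $\alpha(2)$ and $\alpha(4)$ as genuine base data.
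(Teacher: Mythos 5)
The paper does not actually prove this statement: it imports it verbatim as Theorem 2 of Renault's article, so there is no internal proof to compare yours against, and any correct self-contained argument is by definition a different route. Your argument is correct. The 2-adic valuation formula you state (Lengyel's formula) is right, the derivation via $F_{2n}=F_nL_n$ and $L_n^2-5F_n^2=4(-1)^n$ is sound, and the circularity concern you flag is handled properly: the only external inputs are $\alpha(2)=3$, $\alpha(4)=6$, and \Cref{lem:Fib0}, which give $v_2(F_n)=0$ off multiples of $3$, $v_2(F_n)=1$ on odd multiples of $3$, and $v_2(F_n)\ge 2$ on multiples of $6$; everything deeper comes from repeated doubling. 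The one step you should write out explicitly is the claim that $v_2(L_n)=2$ \emph{exactly} when $n\equiv 3\pmod 6$: writing $F_n=2u$ with $u$ odd, the identity gives $L_n^2=4(5u^2-1)$, and you need the observation $u^2\equiv 1\pmod 8$ to conclude $5u^2-1\equiv 4\pmod 8$ and hence $v_2(L_n^2)=4$; without the mod-$8$ step you only get a lower bound on $v_2(L_n)$, and an uncontrolled excess there would corrupt the valuation formula and the subsequent minimization. The minimization itself is correct, including the split between $s=3$ and $s\ge 4$ and the computation $\lcm(3,2^{s-2})=6\cdot 2^{s-3}$. Compared with Renault's treatment, which develops rank-of-apparition lifting machinery for general $(a,b)$-Fibonacci sequences mod $m$, your route is more elementary and proves something strictly stronger — the exact value of $v_2(F_n)$ for every $n$ — while isolating precisely why $p=2$ is anomalous: the single jump $v_2(L_3)=2$ is what breaks the odd-prime pattern of \Cref{thm:alphaprime} and forces the exponent $s-3$ in place of $s-1$.
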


For example, using \Cref{thm:lcm}, and $\alpha(3)$ and $\alpha(4)$ from \Cref{table:Fib},  $$\alpha(12) = \alpha(\text{lcm}(3,4)) = \text{lcm}(\alpha(3),\alpha(4)) = \text{lcm}(4,6)=12.$$
We now have an infinite class of board sizes for which the game with 12 states  is one-pass solvable. Specifically, if the number of rows of the board is congruent to 0 or 11 (mod 12) then the game with 12 states for the lights and in which all lights start at the same state will be one-pass solvable. (However, since $\mathbb{Z}_{12}$ is not an integral domain, we have not necessarily found all possible board sizes for which the game is one-pass solvable.)

\Cref{thm:alphaprime} is particularly powerful when $k$ is a power of 5. Since $F_5=5$, it follows that $\alpha(5)=5$ and $\alpha(25) \ne \alpha(5)$. Thus, $p=5$ satisfies the premise of \Cref{thm:alphaprime}, and so $\alpha(5^s)=5^{s-1}\alpha(5)=5^s$.

\begin{theorem}
   Let $s$ be a natural number. The  cylindrical Lights Out game with $i$ rows in which the lights have $5^s$ states and all lights start at the same state is one-pass solvable if $i \equiv 0$ (mod $5^s$) or $i \equiv -1$ (mod $5^s$). 
\end{theorem}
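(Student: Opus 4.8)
The plan is to reduce the statement entirely to two tools already developed: the characterization of one-pass solvability in terms of the restricted period (\Cref{lem:nsatResPer}) and Wall's formula for the restricted period of odd prime powers (\Cref{thm:alphaprime}). Since \Cref{lem:nsatResPer} guarantees one-pass solvability whenever $i \equiv 0$ or $i \equiv -1 \pmod{\alpha(5^s)}$, it suffices to show that $\alpha(5^s) = 5^s$; the conclusion then follows by substituting $k = 5^s$.

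First I would pin down $\alpha(5)$. Reading off the Fibonacci sequence, $F_1 = 1$, $F_2 = 1$, $F_3 = 2$, $F_4 = 3$, and $F_5 = 5$, so the first positive index at which a Fibonacci number is divisible by $5$ is $i = 5$, giving $\alpha(5) = 5$. Next I would verify the hypothesis of \Cref{thm:alphaprime} for the prime $p = 5$, namely that $\alpha(25) \neq \alpha(5)$. This is immediate: $F_{\alpha(5)} = F_5 = 5$ is not divisible by $25$, so $\alpha(25)$ cannot equal $5$, and hence $\alpha(25) \neq \alpha(5)$. With the hypothesis in hand, \Cref{thm:alphaprime} applies to give $\alpha(5^s) = 5^{s-1}\alpha(5) = 5^{s-1}\cdot 5 = 5^s$ for every natural number $s$. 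Substituting $k = 5^s$ and $\alpha(5^s) = 5^s$ into \Cref{lem:nsatResPer} then yields exactly the claim.

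I do not expect a genuine obstacle here; the theorem is a direct corollary of the machinery assembled above, and the only real content is the short verification that $p = 5$ satisfies the premise of \Cref{thm:alphaprime}. The one point requiring minor care is that \Cref{lem:nsatResPer} is only an implication, not an equivalence, for general $k$, so this argument establishes sufficiency but not necessity — which is consistent with the theorem statement, as it only claims the ``if'' direction. I would also note that one cannot upgrade the statement to an ``if and only if'' by appealing to \Cref{cor:noZeroDiv}, since $\mathbb{Z}_{5^s}$ fails to be an integral domain for $s \geq 2$ (for instance $5 \cdot 5 \equiv 0 \pmod{25}$), so the presence of zero divisors leaves open the possibility of additional solvable board sizes beyond those identified here.
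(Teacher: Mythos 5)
Your proposal is correct and follows essentially the same route as the paper: the paper likewise observes that $F_5=5$ forces $\alpha(5)=5$ and $\alpha(25)\neq\alpha(5)$, applies \Cref{thm:alphaprime} to conclude $\alpha(5^s)=5^{s-1}\alpha(5)=5^s$, and then invokes \Cref{lem:nsatResPer}. Your added remarks --- that the argument gives only sufficiency and that \Cref{cor:noZeroDiv} cannot upgrade it to an equivalence when $s\geq 2$ --- are accurate and consistent with the paper's own caveats.
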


Combining \Cref{thm:lcm},  \Cref{thm:alphaprime}, and \Cref{thm:alph2}, we see that if we write $k$ in terms of its prime factorization $k=\prod_{j=1}^rp_j^{s_j}$ where $p_j$ is prime and $s_j$ is a natural number then $$\alpha(k) = \text{lcm}(\alpha(p_1^{s_1}),\alpha(p_2^{s_2}),\dots,\alpha(p_r^{s_r})).$$ Thus, we can calculate $\alpha(k)$ if we know $\alpha(p_i)$ (and that $\alpha(p_i) \ne \alpha(p_i^2)$) for all $i$. This allows us to determine an infinite class of board sizes for which the cylindrical Lights Out game with $k$ lights states and in which all lights start at the same state is one-pass solvable.

We end with a final example. Consider a game in which the lights have 1200 states and all lights start at the same state.  To determine a collection of board sizes for which this game is one-pass solvable we calculate $\alpha(k)$. Since $k=1200=5^2\cdot 2^4\cdot 3$, it follows that $$\alpha (1200)=\text{lcm}(\alpha(5^2),\alpha(2^4),\alpha(3)) = \text{lcm}(25,12,4) = 300.$$ Therefore, this game  will be one-pass solvable if the number of rows is congruent to 0 or $-1$ (mod 300). Due to the size of the board, we do not recommend this game for a long car ride.

\end{document}